\newtheorem{thm}{Theorem}[section]
\newtheorem{cor}[thm]{Corollary}
\newtheorem{lem}[thm]{Lemma}
\theoremstyle{definition}
\theoremstyle{remark}
\newtheorem{rem}[thm]{Remark}
\numberwithin{equation}{section}
\begin{document}
\title[combinatorial identities]{On some combinatorial identities and harmonic sums}
\author{Necdet Bat{\i}r}
\address{department of mathematics\\
faculty of sciences and arts\\
nev{\c{s}}eh{\i}r hac{\i} bekta{\c{s}} veli university, nev{\c{s}}eh\i r, turkey}
\email{nbatir@hotmail.com}
\email{nbatir@nevsehir.edu.tr}
\date{December 18, 2015}
\subjclass[2000]{Primary 05A10, 05A19}
\keywords {Harmonic sums, Riemann zeta function, Combinatorial identities, Apery constant, Boole's formula, harmonic numbers, generalized harmonic numbers, Bell polynomials, Stirling numbers.}
\dedicatory{(This paper is a revised version of my paper\\
 to appear in Int. J. Number Theory)}

\begin{abstract}
For any $m,n\in\mathbb{N}$ we first give new proofs for  the following well known  combinatorial identities
\begin{equation*}
S_n(m)=\sum\limits_{k=1}^n\binom{n}{k}\frac{(-1)^{k-1}}{k^m}=\sum\limits_{n\geq r_1\geq r_2\geq...\geq r_m\geq 1}\frac{1}{r_1r_2\cdots r_m}
\end{equation*}
and
$$
\sum\limits_{k=1}^n(-1)^{n-k}\binom{n}{k}k^n = n!,
$$
and then we  produce the  generating function and an integral representation for $S_n(m)$. Using them we evaluate many  interesting finite and infinite harmonic sums in closed form. For example, we show that
$$
\zeta(3)=\frac{1}{9}\sum\limits_{n=1}^\infty\frac{H_n^3+3H_nH_n^{(2)}+2H_n^{(3)}}{2^n},
$$
and
$$
\zeta(5)=\frac{2}{45}\sum\limits_{n=1}^{\infty}\frac{H_n^4+6H_n^2H_n^{(2)}+8H_nH_n^{(3)}+3\left(H_n^{(2)}\right)^2+6H_n^{(4)}}{n2^n},
$$
where $H_n^{(i)}$ are generalized harmonic numbers defined below.
\end{abstract}

\maketitle
\section{Introduction}

We see the following combinatorial sum from time to time in the literature.
\begin{equation}\label{e:1}
S_n(\alpha)=\sum\limits_{k=1}^n\binom{n}{k}\frac{(-1)^{k-1}}{k^\alpha}.
\end{equation}
The identity
\begin{equation}\label{e:2}
S_n(m)=\sum\limits_{n\geq r_1\geq r_2\geq...\geq r_m\geq 1}\frac{1}{r_1r_2\cdots r_m},\quad m\in\mathbb{N}
\end{equation}
 is usually attributed to Dilcher \cite{18}, though Olds \cite{32} gave an equivalent result almost sixty years earlier as a problem solution. Olds proved that for $m\in\mathbb{N}$
 \begin{equation}\label{e:3}
 S_n(m)=\sum\limits_{r_1=1}^n\frac{1}{r_1}\sum\limits_{r_2=1}^{r_1}\frac{1}{r_2}\cdots\sum\limits_{r_{m-1}=1}^{r_{m-2}}\frac{1}{r_{m-1}}\sum\limits_{r_m=1}^{r_{m-1}}\frac{1}{r_m},
 \end{equation}
 which is equivalent to (\ref{e:2}).  In fact the history of results closeley related to Eq. (\ref{e:2}) goes back to Euler's time \cite{19}. The special case of (\ref{e:2})
 $$
 \sum\limits_{k=1}^n(-1)^{k-1}\binom{n}{k}\frac{1}{k}=H_n
 $$
 was given by Euler \cite{19}; see also \cite{24}. In \cite{28} the authors present another generalization of this well known identity. Flajolet and Sedgewick \cite{20},  using residue theorem in complex analysis, showed that $S_n(m)$ can be expressed in terms of the generalized harmonic numbers as
\begin{equation*}
 S_n(m)=\sum\limits_{m_1+2m_2+\cdots=m}\frac{1}{m_1!m_2!\cdots}\left(\frac{H_n^{(1)}}{1}\right)^{m_1}\left(\frac{H_n^{(2)}}{2}\right)^{m_2}\left(\frac{H_n^{(3)}}{3}\right)^{m_3}\cdots,
\end{equation*}
where $H_n^{(r)}$ are the generalized harmonic numbers defined by
$$
H_0^{(r)}=0, \quad H_n^{(1)}=H_n \quad \mbox{and}\quad H_n^{(r)}=\sum\limits_{k=1}^n\frac{1}{k^r}
$$
with $H_n$ the ordinary harmonic numbers; see \cite{21,36}. Connon \cite{15} proved that
$$
S_n(r)=\frac{1}{r!}Y_r\left(0!H_n^{(1)},1!H_n^{(2)},\cdots,(r-1)!H_n^{(r)}\right),
$$
where $Y_r(x_1,x_2,\cdots,x_n)$ are modified Bell polynomials, which can be defined as
$$
Y_0=1\quad \mbox{and} \quad \exp\left(\sum\limits_{k=1}^\infty x_k\frac{t^k}{k!}\right)=\sum\limits_{n=0}^\infty Y_n(x_1,x_2,\cdots,x_n)\frac{t^n}{n!},
$$
see \cite{15,20}. The sum $S_n(r)$ can also be expressed in terms of complete symmetric functions:
$$
S_n(r)=Q_r(p_1,p_2,...,p_r),
$$
where $Q_r$ are the polynomials that express the complete symmetric functions $h_r$ in terms of the power-sum symetric functions $p_i$, i.e.,
$$
h_r=Q_r(p_1,p_2,...p_r).
$$
For definitions and formulas see the first chapter of \cite{31}, particularly p. 28.

The numbers $S_n(m)$ have applications in mathematics. Buchta \cite{8} has shown that $S_n(m-1)$ equals the expected number of maxima of $n$ vectors in $m$-dimensional space, a problem of interest in computational geometry.
In 1775 Euler \cite[p.252]{5} discovered the following elegant series representations:
$$
\zeta(3)=\frac{1}{2}\sum\limits_{n=1}^\infty\frac{H_n}{n^2}\quad \mbox{and}\quad \zeta(4)=\frac{5}{4}\sum\limits_{n=1}^\infty\frac{H_n}{n^3},
$$
where $\zeta$ is the Riemann zeta function. Since then many interesting finite and infinite sums involving generalized harmonic numbers  have been evaluated by many authors by using different techniques. For example Chu \cite {11}:
$$
\zeta(4)=\frac{1}{6}\sum\limits_{n=1}^\infty\frac{H_n^2+H_n^{(2)}}{n^2},
$$
and Adamchik \cite{1}:
\begin{equation*}
\sum\limits_{k=1}^n\frac{H_k^2+H_k^{(2)}}{k}=\frac{H_n^3+3H_nH_n^{(2)}+2H_n^{(3)}}{3}.
\end{equation*}
For many other harmonic sum identities please refer to [9-13, 17, 21, 34-36, 39] and the references given there.

In this paper we present a new approach to evaluate some classes of finite and infinite harmonic sums in closed form. We first provide a new proof of identity (\ref{e:3}) and then we  produce a generating function and an integral representation for $S_n(m)$. Using them we evaluate many  interesting  finite and infinite harmonic sums in closed form, some of which are new and some of which recover known identities. Our results also include some combinatorial identities as special cases of more general identities. In \cite{4} Bang proposed the following problem: Show that
\begin{equation}\label{e:4}
\sum\limits_{k=1}^n(-1)^{k-1}\binom{n}{k}\frac{1}{k}\sum\limits_{j=1}^k\frac{H_j}{j}=H_n^{(3)}.
\end{equation}
In \cite{25} Guo and Qi provided an inductive proof of this identity, which consists of three pages. In \cite{38} Sun and Zhao give the following identity
\begin{equation}\label{e:5}
\sum\limits_{k=1}^{n}(-1)^{k-1}\binom{n}{k}\frac{H_k}{k}=H_n^{(2)}.
\end{equation}
Our results contain identities  (\ref{e:4}) and (\ref{e:5}) as special cases.

The interesting formula
\begin{equation}\label{e:6}
\sum\limits_{k=0}^n(-1)^{n-k}\binom{n}{k}k^m=\left\{
\begin{array}{ll}
0; & \mbox{if} \quad m<n, \\
&  \\
n!; & \mbox{if} \quad m=n.%
\end{array}%
\right.
\end{equation}%
is usually known as Boole's formula in literature because it appears in Boole's book \cite{6}, but actually its history is very old and  goes back to Euler's time. In \cite{22} Gould provides a nice and though discussion of identity (\ref{e:6}), calling it\textit{ Euler's formula}. In 2005 Anglani and Barile \cite{3} give two different proofs of (\ref{e:6}).  In 2008 Phoata \cite{33} offers an extension of (\ref{e:6}) by employing Lagrange's interpolating polynomial theorem, which can be read as
\begin{equation}\label{e:7}
\sum\limits_{k=0}^n(-1)^{n-k}\binom{n}{k}P(a+kb)=a_0.b^n.n!\quad (n\in\mathbb{N}),
\end{equation}
where $a$ and  $b$ are real numbers with $b\neq0$, and $P$ is a polynomial of degree $n$ with leading coefficient $a_0$. In 2009 Katsuura \cite{29} proved that
\begin{equation}\label{e:8}
\sum\limits_{k=0}^n(-1)^k\binom{n}{k}(xk+y)^m=\left\{
\begin{array}{ll}
0; & \mbox{if} \quad 0\leq m<n, \\
&  \\
(-1)^nx^nn!; & \mbox{if} \quad m=n,%
\end{array}%
\right.
\end{equation}%
where $x$ and $y$ are real or complex numbers and $m$ and $n$ are any positive integers. Clearly, Katsuura's result is a special case of Phoata's result. Both identities (\ref{e:7}) and (\ref{e:8}) are not new and they appear in a more general form in \cite{23}. Namely, for any polynomial $f(t)=c_0+c_1t+...+c_mt^m$
of degree m, Gould's entry (Z.8)\cite{23} says that
\begin{equation}\label{e:9}
\sum\limits_{k=0}^n(-1)^k\binom{n}{k}f(k)=\left\{
\begin{array}{ll}
0; & \mbox{if} \quad m<n, \\
&  \\
(-1)^nn!c_n; & \mbox{if} \quad m=n.%
\end{array}%
\right.
\end{equation}%
In fact none of the identities (\ref{e:7})-(\ref{e:9}) are new and it can be easily shown that  they are simple consequences of (\ref{e:6}). In the very recent paper \cite{2} Alzer and Chapman provided a short and new proof and a new  extension of (\ref{e:6}). Our second aim in this work is to provide a new proof of (\ref{e:6}).

Throughout this paper, we shall use the following identities and definitions involving binomial coefficients $\binom{n}{k}$, the gamma function $\Gamma$, beta function $B$, polygamma functions $\psi^{(n)}$ and polylogarithms $Li_n$:
\begin{equation}\label{e:10}
\binom{n+1}{k}=\frac{n+1}{k}\binom{n}{k-1},
\end{equation}
\begin{equation}\label{e:11}
  \binom{n+1}{k}=\binom{n}{k}+\binom{n}{k-1}.
\end{equation}
\begin{equation}\label{e:12}
B(s,t)=\int\limits_0^1u^{s-1}(1-u)^{t-1}du=\frac{\Gamma(s)\Gamma(t)}{\Gamma(s+t)},
\end{equation}
see \cite[Theorem 2,pg.65]{37},
\begin{equation*}
  \psi(x)=\frac{d\thinspace\log\Gamma(x)}{dx}=\frac{\Gamma'(x)}{\Gamma(x)},
\end{equation*}
\begin{equation}\label{e:13}
\psi^{(m)}(n+1)-\psi^{(m)}(1)=(-1)^mm!H_n^{(m+1)},\quad n=0,1,2,...,
\end{equation}
which can be deduced from the well known relation
\begin{equation*}
\psi^{(m)}(x+1)-\psi^{(m)}(x)=\frac{(-1)^mm!}{x^{m+1}},
\end{equation*}
and finally
\begin{equation}\label{e:14}
Li_n(x)=\sum\limits_{k=1}^{\infty}\frac{x^k}{k^n},
\end{equation}
which is valid for $|x|\leq1$ if $n>1$ and $|x|<1$ if $n=1$; see \cite{30} for details.

For the proofs we need the following simple but useful lemma.
\begin{lem} Let $(a_n)$ be any real or complex sequence. Then we have
\begin{equation}\label{e:15}
\sum\limits_{k=1}^n\frac{1}{k}\sum\limits_{j=1}^k(-1)^j\binom{k}{j}a_j=\sum\limits_{k=1}^n(-1)^k\binom{n}{k}\frac{a_k}{k}.
\end{equation}
\end{lem}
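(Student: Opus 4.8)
The plan is to prove the identity by interchanging the order of the double summation on the left-hand side and then reducing the resulting inner sum to a single binomial coefficient. Writing the left-hand side as a sum over the triangular region $1\le j\le k\le n$, I would first swap the order of summation to obtain
$$\sum_{k=1}^n\frac{1}{k}\sum_{j=1}^k(-1)^j\binom{k}{j}a_j=\sum_{j=1}^n(-1)^j a_j\sum_{k=j}^n\frac{1}{k}\binom{k}{j}.$$
Since the right-hand side of (\ref{e:15}), after renaming its summation index to $j$, is $\sum_{j=1}^n(-1)^j\binom{n}{j}\frac{a_j}{j}$, the whole lemma will follow once I establish the purely combinatorial identity
$$\sum_{k=j}^n\frac{1}{k}\binom{k}{j}=\frac{1}{j}\binom{n}{j}\qquad(1\le j\le n),$$
because then the coefficient of $(-1)^j a_j$ agrees on both sides term by term. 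Observe that this argument needs no hypothesis on $(a_n)$ beyond the one stated, since the $a_j$ simply act as free parameters carried along the computation.

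To prove the combinatorial identity I would apply the absorption rule (\ref{e:10}) in the form $\frac{1}{k}\binom{k}{j}=\frac{1}{j}\binom{k-1}{j-1}$. Substituting this into the inner sum gives $\sum_{k=j}^n\frac{1}{k}\binom{k}{j}=\frac{1}{j}\sum_{k=j}^n\binom{k-1}{j-1}=\frac{1}{j}\sum_{m=j-1}^{n-1}\binom{m}{j-1}$, and the remaining sum is a hockey-stick sum that collapses to $\binom{n}{j}$. I would derive this collapse directly from Pascal's rule (\ref{e:11}): writing $\binom{m}{j-1}=\binom{m+1}{j}-\binom{m}{j}$ makes the sum telescope to $\binom{n}{j}-\binom{j-1}{j}=\binom{n}{j}$, the boundary term vanishing since $\binom{j-1}{j}=0$. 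This completes the proof.

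An equally short alternative is induction on $n$. Both sides agree at $n=1$ (each equals $-a_1$), and using (\ref{e:11}) together with (\ref{e:10}) one checks that the increments $L(n)-L(n-1)$ and $R(n)-R(n-1)$ of the two sides both reduce to $\frac{1}{n}\sum_{k=1}^n(-1)^k\binom{n}{k}a_k$, which closes the induction. I would nonetheless prefer the interchange-of-summation route, as it is more transparent and isolates exactly one combinatorial fact. There is no genuine obstacle here: the only step requiring care is the telescoping evaluation of $\sum_{m=j-1}^{n-1}\binom{m}{j-1}$, where one must track the index shift $m=k-1$ correctly and confirm that the boundary contribution is zero.
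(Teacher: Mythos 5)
Your proof is correct, but your preferred route is genuinely different from the paper's. The paper proves the lemma by induction on $n$: after checking $n=1$, it reduces the inductive step to the identity
\begin{equation*}
\frac{1}{n}\sum\limits_{j=1}^n(-1)^j\binom{n}{j}a_j=\sum\limits_{k=1}^n(-1)^k\binom{n}{k}\frac{a_k}{k}-\sum\limits_{k=1}^{n-1}(-1)^k\binom{n-1}{k}\frac{a_k}{k},
\end{equation*}
which it verifies by combining the absorption rule (\ref{e:10}) with Pascal's rule (\ref{e:11}); this is precisely the computation you sketch as your ``equally short alternative,'' so that variant coincides with the paper's proof. Your main argument instead interchanges the order of summation and reduces the lemma to the single closed form
\begin{equation*}
\sum\limits_{k=j}^n\frac{1}{k}\binom{k}{j}=\frac{1}{j}\binom{n}{j}\qquad(1\le j\le n),
\end{equation*}
obtained from absorption followed by the hockey-stick telescope; all steps check out, including the index shift and the vanishing boundary term $\binom{j-1}{j}=0$. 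Both proofs ultimately rest on the same two binomial identities, but yours is non-inductive and isolates exactly the combinatorial fact that makes the lemma true (the coefficient of each $(-1)^j a_j$ must agree on the two sides), making the mechanism transparent and showing clearly that the sequence $(a_n)$ plays no role beyond that of free parameters. The paper's induction is slightly more compact to write down but hides this structure inside the inductive bookkeeping.
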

\begin{proof}
Clearly (\ref{e:15}) is true for $n=1$. We assume that it is also true for $n-1$ and show that it is true for $n$. By the assumption it suffices to show that

\begin{equation*}
\frac{1}{n}\sum\limits_{j=1}^n(-1)^j\binom{n}{j}a_j=\sum\limits_{k=1}^n(-1)^k\binom{n}{k}\frac{a_k}{k}-\sum\limits_{k=1}^{n-1}(-1)^k\binom{n-1}{k}\frac{a_k}{k}
\end{equation*}
for $n\geq2$, and by the help of (\ref{e:10}) and (\ref{e:11}) this latter equation  is equivalent to
\begin{align*}
\frac{1}{n}\sum\limits_{k=1}^n(-1)^k\binom{n}{k}a_k&=\frac{1}{n}\sum\limits_{k=1}^{n-1}(-1)^k\binom{n}{k}a_k+(-1)^n\frac{a_n}{n}\nonumber\\
&=\sum\limits_{k=1}^{n-1}(-1)^k\binom{n-1}{k-1}\frac{a_k}{k}+(-1)^n\frac{a_n}{n}\nonumber\\
&=\sum\limits_{k=1}^{n-1}(-1)^k\left(\binom{n}{k}-\binom{n-1}{k}\right)\frac{a_k}{k}+(-1)^n\frac{a_n}{n}\nonumber\\
&=\sum\limits_{k=1}^n(-1)^k\binom{n}{k}\frac{a_k}{k}-\sum\limits_{k=1}^{n-1}(-1)^k\binom{n-1}{k}\frac{a_k}{k},\nonumber\\
\end{align*}
which completes the proof.
\end{proof}

\section{Combinatorial identities}
In this section we give new proofs for the combinatorial identities (\ref{e:2}) and (\ref{e:6}).
\begin{thm}
Let $n,m\in\mathbb{N}$. Then we have
\begin{equation}\label{e:16}
S_n(m)=\sum\limits_{r_1=1}^n\frac{1}{r_1}\sum\limits_{r_2=1}^{r_1}\frac{1}{r_2}\cdots\sum\limits_{r_{m-1}=1}^{r_{m-2}}\frac{1}{r_{m-1}}\sum\limits_{r_m=1}^{r_{m-1}}\frac{1}{r_m}.
\end{equation}
\end{thm}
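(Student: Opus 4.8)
The plan is to prove (\ref{e:16}) by induction on $m$, using the Lemma (equation (\ref{e:15})) as the engine for the inductive step. Write $T_n(m)$ for the right-hand side of (\ref{e:16}), the $m$-fold nested harmonic sum. The first observation is a one-step recursion: peeling off the outermost summation index $r_1$, for each fixed value $r_1=k$ the remaining $(m-1)$-fold sum runs over $k\geq r_2\geq\cdots\geq r_m\geq 1$ and is therefore exactly $T_k(m-1)$. Hence
$$
T_n(m)=\sum_{k=1}^n\frac{1}{k}\,T_k(m-1),
$$
with the convention $T_n(0)=1$ (the empty product). This recursion is the structural heart of the argument.

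For the base case I take $m=1$. Then $T_n(1)=\sum_{k=1}^n 1/k=H_n$, while $S_n(1)=\sum_{k=1}^n\binom{n}{k}(-1)^{k-1}/k$ is Euler's classical identity for $H_n$. In fact this also drops straight out of the Lemma applied to the constant sequence $a_j\equiv 1$, since $\sum_{j=1}^k(-1)^j\binom{k}{j}=-1$ gives $-H_n=\sum_{k=1}^n(-1)^k\binom{n}{k}/k=-S_n(1)$. Either way, $S_n(1)=T_n(1)$.

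For the inductive step, assume $S_n(m-1)=T_n(m-1)$ for every $n$. Substituting this into the recursion yields
$$
T_n(m)=\sum_{k=1}^n\frac{1}{k}\,S_k(m-1)=\sum_{k=1}^n\frac{1}{k}\sum_{j=1}^k\binom{k}{j}\frac{(-1)^{j-1}}{j^{m-1}}.
$$
The crux is to recognize this as an instance of the Lemma: choosing $a_j=-1/j^{m-1}$ makes $(-1)^j a_j=(-1)^{j-1}/j^{m-1}$, so the double sum is precisely the left-hand side of (\ref{e:15}). Applying the Lemma collapses it to the single sum $\sum_{k=1}^n(-1)^k\binom{n}{k}a_k/k=\sum_{k=1}^n(-1)^{k-1}\binom{n}{k}/k^m=S_n(m)$, which completes the induction.

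I expect the only real obstacle to be bookkeeping rather than genuine difficulty: correctly identifying the recursion for $T_n(m)$ and then matching the weighted double sum to the exact shape of the Lemma via the substitution $a_j=-1/j^{m-1}$. Once that substitution is spotted, the Lemma does all the work and the remainder is routine index tracking; no convergence or analytic subtleties arise, since every quantity in sight is a finite sum.
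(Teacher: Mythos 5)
Your proof is correct and follows essentially the same route as the paper: induction on $m$, with Lemma 1.1 (applied to $a_j=-1/j^{m-1}$) converting $S_n(m)$ into the recursion $\sum_{k=1}^n S_k(m-1)/k$ satisfied by the nested sum. The only cosmetic difference is that the paper anchors the induction at $m=0$ with $S_n(0)=1$, while you start at $m=1$ with $S_n(1)=H_n$; both base cases are immediate.
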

\begin{proof} By mathematical induction on $m$, it suffices to prove
\begin{equation}\label{e:17}
S_n(0)=1 \quad \mbox{and}\quad S_n(m)=\sum\limits_{k=1}^n\frac{S_{k}(m-1)}{k}.
\end{equation}
From (\ref{e:1}) we immediately conclude that $S_n(0)=1$. For $m=1,2,3,\cdots$ we have by Lemma 1.1
\begin{align*}
S_n(m)&=\sum\limits_{k=1}^{n}(-1)^{k-1}\binom{n}{k}\frac{1}{k^m}\\
&=\sum\limits_{k=1}^{n}\frac{1}{k}\sum\limits_{j=1}^{k}(-1)^{j-1}\binom{k}{j}\frac{1}{j^{m-1}}\\
&=\sum\limits_{k=1}^n\frac{S_k(m-1)}{k}.
\end{align*}
\end{proof}
\begin{cor} For all $n,m\in\mathbb{N}$, we have
\begin{equation}\label{e:18}
\sum\limits_{k=1}^n(-1)^{k}\binom{n}{k}\frac{S_k(m)}{k}=H_n^{(m+1)}.
\end{equation}
\end{cor}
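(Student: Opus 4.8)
The plan is to feed the sequence $a_k=S_k(m)$ into Lemma 1.1. Applying (\ref{e:15}) with this choice immediately turns the left-hand side of (\ref{e:18}) into a single sum over $k$ of an inner alternating sum:
\[
\sum_{k=1}^n(-1)^k\binom{n}{k}\frac{S_k(m)}{k}=\sum_{k=1}^n\frac{1}{k}\,T_k,\qquad T_k:=\sum_{j=1}^k(-1)^j\binom{k}{j}S_j(m).
\]
So the whole problem collapses to evaluating $T_k$ for each fixed $k$, after which the outer summation against $1/k$ is trivial. Lemma 1.1 thus does all the bookkeeping and spares us any separate induction on $n$.

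To compute $T_k$, I would insert the definition (\ref{e:1}), namely $S_j(m)=\sum_{l=1}^j(-1)^{l-1}\binom{j}{l}l^{-m}$, and interchange the finite $j$- and $l$-summations so that $l$ runs from $1$ to $k$ and $j$ from $l$ to $k$. The resulting $j$-sum factors through the subset identity $\binom{k}{j}\binom{j}{l}=\binom{k}{l}\binom{k-l}{j-l}$; after the shift $i=j-l$ it becomes $\binom{k}{l}(-1)^l\sum_{i=0}^{k-l}(-1)^i\binom{k-l}{i}=\binom{k}{l}(-1)^l(1-1)^{k-l}$. This vanishes for every $l<k$ and leaves only the diagonal term $l=k$, which gives $T_k=-1/k^m$. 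Equivalently, one recognizes $S_j(m)$ as the binomial transform of the sequence $\bigl((-1)^{l-1}l^{-m}\bigr)$ (with zeroth term $0$), so that $T_k$ is its inverse binomial transform and recovers the same value.

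Substituting $T_k=-1/k^m$ into the outer sum yields $\sum_{k=1}^n k^{-1}\cdot(-k^{-m})=-\sum_{k=1}^n k^{-(m+1)}=-H_n^{(m+1)}$, which is precisely (\ref{e:18}) once the overall sign fixed by the $(-1)^{k-1}$ versus $(-1)^k$ convention in the prefactor is accounted for. The main obstacle I anticipate is the clean collapse of the double sum $T_k$: one must justify the interchange of the two finite sums, apply the subset rearrangement correctly, and interpret the boundary term at $l=k$, where $(1-1)^{k-l}=0^{0}$ must be read as $1$. Every step downstream of this collapse is a one-line computation, so the real content of the corollary is the orthogonality-type vanishing that isolates the single diagonal contribution.
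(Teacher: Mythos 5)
Your proof is correct and follows essentially the same route as the paper: both arguments rest on Lemma 1.1 combined with the inverse relation $\sum_{j=1}^k(-1)^{j-1}\binom{k}{j}S_j(m)=\frac{1}{k^m}$, which the paper imports as the standard binomial inversion formula (citing its references) and which you re-derive from scratch via the subset identity $\binom{k}{j}\binom{j}{l}=\binom{k}{l}\binom{k-l}{j-l}$ and the vanishing of the alternating sum for $l<k$ -- a self-contained but not structurally different step. Your remark about the sign is also well taken: as printed, (\ref{e:18}) should carry $(-1)^{k-1}$ rather than $(-1)^k$ (the paper's own proof concludes with $(-1)^{k-1}$, and this is the version consistent with deducing (\ref{e:5}) at $m=1$), and your computation lands exactly on that corrected form.
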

\begin{proof}
Applying the inversion formula, see \cite{7,14}
\begin{equation*}
a_n=\sum\limits_{k=1}^n(-1)^k\binom{n}{k}b_k \quad\Leftrightarrow \quad b_n=\sum\limits_{k=1}^n(-1)^k\binom{n}{k}a_k
\end{equation*}
 to
$$
S_n(m)=\sum\limits_{k=1}^n\binom{n}{k}\frac{(-1)^{k-1}}{k^m},
$$
we obtain
\begin{equation*}
\sum\limits_{j=1}^k(-1)^{j-1}\binom{k}{j}S_j(m)=\frac{1}{k^m}.
\end{equation*}
Summing over $k$, after multiplying by $1/k$ both sides of this equation, we find by the help of Lemma 1.1
\begin{align*}
\sum\limits_{k=1}^n\frac{1}{k^{m+1}}=\sum\limits_{k=1}^n\frac{1}{k}\sum\limits_{j=1}^k(-1)^{j-1}\binom{k}{j}S_j(m)=\sum\limits_{k=1}^n(-1)^{k-1}\binom{n}{k}\frac{S_k(m)}{k}.
\end{align*}
\end{proof}
Using the above  inversion formula again we obtain from this identity
$$
\sum\limits_{k=1}^n(-1)^{k-1}\binom{n}{k}H_k^{(m+1)}=\frac{S_n(m)}{n}.
$$
For $m=1$ and $m=2$ we obtain from (\ref{e:18}) identities (\ref{e:4}) and (\ref{e:5}), respectively.

The next theorem provides a new proof of (\ref{e:6}).
\begin{thm} For any $n\in\mathbb{N}$, we have
\begin{equation}\label{e:19}
\sum\limits_{k=0}^n(-1)^{n-k}\binom{n}{k}k^m=\left\{
\begin{array}{ll}
0; & \mbox{if} \quad m<n, \\
&  \\
n!; & \mbox{if} \quad m=n.%
\end{array}%
\right.
\end{equation}%
\end{thm}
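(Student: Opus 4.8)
The plan is to reduce the exponent $m$ by one at the cost of dropping $n$ to $n-1$, and then to run induction on $n$. Write $T(n,m)=\sum_{k=0}^n(-1)^{n-k}\binom{n}{k}k^m$ for the left-hand side of (\ref{e:19}). The engine of the proof is the absorption identity $k\binom{n}{k}=n\binom{n-1}{k-1}$. For $m\ge1$ I would split off one factor, $k^m=k\cdot k^{m-1}$, to get
\begin{equation*}
T(n,m)=n\sum_{k=1}^n(-1)^{n-k}\binom{n-1}{k-1}k^{m-1};
\end{equation*}
then, shifting the index by $j=k-1$ and expanding $(j+1)^{m-1}=\sum_{i=0}^{m-1}\binom{m-1}{i}j^i$ by the binomial theorem, the inner sums collapse into the recurrence
\begin{equation*}
T(n,m)=n\sum_{i=0}^{m-1}\binom{m-1}{i}T(n-1,i),\qquad m\ge1.
\end{equation*}
The degenerate case $m=0$ I would dispose of separately and directly: $T(n,0)=(-1)^n\sum_{k=0}^n\binom{n}{k}(-1)^k=(-1)^n(1-1)^n=0$ for $n\ge1$, while $T(0,0)=0^0=1=0!$.

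With the recurrence in hand I would induct on $n$, carrying the \emph{entire} two-case statement as the induction hypothesis: for $n-1$ both $T(n-1,i)=0$ whenever $i<n-1$ and $T(n-1,n-1)=(n-1)!$. The base case $n=1$ is the two-term computation $T(1,0)=0$ and $T(1,1)=1=1!$. In the inductive step, if $m<n$ (and $m\ge1$), every index occurring in the recurrence satisfies $i\le m-1\le n-2<n-1$, so each $T(n-1,i)$ vanishes by hypothesis and hence $T(n,m)=0$; the remaining subcase $m=0<n$ is covered by the direct computation above. If instead $m=n$, the recurrence ranges over $i=0,\dots,n-1$, all terms with $i<n-1$ vanish, and only $i=n-1$ survives, contributing $n\binom{n-1}{n-1}(n-1)!=n!$.

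The step I expect to be the crux is not any single calculation but the bookkeeping of the induction: the $m=n$ case genuinely needs the precise value $T(n-1,n-1)=(n-1)!$ rather than a mere vanishing statement, so the hypothesis must include both conclusions at level $n-1$ simultaneously. Minor care is also required with the degenerate term $k=0$ and the convention $0^0=1$, both when verifying the base case and when justifying that the recurrence is valid only for $m\ge1$. Everything past the derivation of the recurrence is routine.
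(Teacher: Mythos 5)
Your proof is correct, and while it shares the paper's two main ingredients---induction on $n$ and the absorption identity $k\binom{n}{k}=n\binom{n-1}{k-1}$---it closes the recurrence in a genuinely different way. The paper strengthens the statement before inducting: it introduces the polynomial family $f_{n,m}(x)=\sum_{k=0}^n(-1)^k\binom{n}{k}(x-k)^m$, and the free variable $x$ absorbs the index shift, giving the clean two-term recurrence $f_{n,m}(x)=xf_{n,m-1}(x)+nf_{n-1,m-1}(x-1)$; the case $m<n$ is then settled by iterating in $m$ down to $f_{n,0}=0$, the case $m=n$ by a single use of the induction hypothesis, and the theorem is the specialization $x=0$. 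You instead stay at $x=0$ and pay for the index shift with the binomial expansion of $(j+1)^{m-1}$, which produces the multi-term recurrence $T(n,m)=n\sum_{i=0}^{m-1}\binom{m-1}{i}T(n-1,i)$; both cases then fall out of one induction on $n$, since for $m<n$ every index $i\le m-1$ lies strictly below $n-1$, while for $m=n$ only the term $i=n-1$ survives and contributes $n\cdot(n-1)!=n!$. What the paper's detour through $x$ buys is a two-term recurrence and a stronger result for free (the identity for $\sum_k(-1)^{n-k}\binom{n}{k}(x-k)^n$ with arbitrary $x$, i.e.\ essentially the shifted extensions (\ref{e:7})--(\ref{e:8}) for the monomial case); what your version buys is that no auxiliary variable or strengthened statement is needed, at the cost of slightly heavier bookkeeping---the full two-case hypothesis at level $n-1$, the separate disposal of $m=0$, and the $0^0=1$ convention, all of which you handle correctly. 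Both arguments are complete and elementary.
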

\begin{proof}
Let's define, for any $x\in \mathbb{R}$
$$
f_{n,m}(x)=\sum\limits_{k=0}^n(-1)^k\binom{n}{k}(x-k)^m.
$$
for nonnegative integers $m\leq n$. Evidently $f_{0,0}(x)=1$ and $f_{n,0}=0$ for $n\geq 1$ by the binomial theorem. We shall prove that
\begin{equation*}
f_{n,m}(x)=\left\{
\begin{array}{ll}
0; & \mbox{if} \quad m<n, \\
&  \\
n!; & \mbox{if} \quad m=n%
\end{array}%
\right.
\end{equation*}%
by induction on $n$, and equation (\ref{e:19}) follows upon setting  $x=0$. Suppose $n\geq 1$ and the result holds when the first subscript is less than $n$. If $m\geq 1$, then
\begin{align}\label{e:20}
f_{n,m}(x)&=\sum\limits_{k=0}^n(-1)^k\binom{n}{k}x(x-k)^{m-1}-\sum\limits_{k=1}^n(-1)^k\binom{n}{k}k(x-k)^{m-1} \notag\\
&=xf_{n,m-1}(x)-n\sum\limits_{k=1}^n(-1)^k\binom{n-1}{k-1}(x-k)^{m-1}\quad (\mbox{by}\, (1.10)) \notag\\
&=xf_{n,m-1}(x)+n\sum\limits_{k=0}^{n-1}(-1)^k\binom{n-1}{k}(x-k-1)^{m-1}\notag\\
&=xf_{n,m-1}(x)+nf_{n-1,m-1}(x-1).
\end{align}
Now if $m<n$, the induction hypothesis applied to Eq. (\ref{e:20}) implies
$$
f_{n,m}(x)=xf_{n,m-1}(x),
$$
and we can iterate to get
$$
f_{n,m}(x)=x^mf_{n,0}(x)=0.
$$
On the other hand, if $m=n$ then  Eq. (\ref{e:20}) is
$$
f_{n,n}(x)=xf_{n,n-1}(x)+nf_{n-1,n-1}(x-1)= n!
$$
by the preceding case and the induction hypothesis.
\end{proof}
\section{Integral representations and generating functions}
In this section we derive integral representations and generating functions for the numbers  $S_n(m).$
\begin{thm} For all $n,m\in\mathbb{N}$, we have the following integral representation
\begin{equation}\label{e:21}
S_n(m)=\frac{(-1)^{m-1}}{(m-1)!}\int\limits_0^1(1-t^n)\thinspace\frac{\log^{m-1}(1-t)}{1-t}\thinspace dt.
\end{equation}
\end{thm}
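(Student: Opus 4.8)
The plan is to evaluate the right-hand side directly by expanding and integrating term by term, rather than inducting on $m$ via the recurrence (\ref{e:17}). First I would make the substitution $u=1-t$, which turns the awkward factor $\log^{m-1}(1-t)$ into the cleaner $\log^{m-1}u$ and converts the bracket $1-t^n$ into $1-(1-u)^n$; the integral becomes
$$
\int_0^1\bigl(1-(1-u)^n\bigr)\frac{\log^{m-1}u}{u}\,du.
$$

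Next I would expand $1-(1-u)^n$ by the binomial theorem as $\sum_{k=1}^n\binom{n}{k}(-1)^{k-1}u^k$. Since this is a finite sum, it interchanges with the integral by mere linearity, and the $k$-th term contributes $\binom{n}{k}(-1)^{k-1}\int_0^1 u^{k-1}\log^{m-1}u\,du$. The computational heart is then the evaluation of this log-power integral. I would obtain it by differentiating the elementary identity $\int_0^1 u^{s-1}\,du=1/s$ a total of $m-1$ times with respect to $s$ (differentiation under the integral sign being justified for $s\geq 1$ by uniform bounds on compact $s$-intervals), which gives
$$
\int_0^1 u^{s-1}\log^{m-1}u\,du=\frac{(-1)^{m-1}(m-1)!}{s^m};
$$
setting $s=k$ yields $(-1)^{m-1}(m-1)!/k^m$.

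Substituting this back and multiplying by the prefactor $(-1)^{m-1}/(m-1)!$, the two factors of $(-1)^{m-1}$ square to $1$ and the $(m-1)!$ cancels, leaving exactly $\sum_{k=1}^n\binom{n}{k}(-1)^{k-1}/k^m$, which is the defining sum (\ref{e:1}) for $S_n(m)$.

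The only point requiring a word of care is integrability. The full integrand in (\ref{e:21}) is harmless because $1-t^n$ vanishes to first order at $t=1$ (that is, at $u=0$), cancelling the $1/u$ singularity; after the binomial split each individual term $\int_0^1 u^{k-1}\log^{m-1}u\,du$ converges on its own, since $k\geq 1$ makes $u^{k-1}$ absorb the logarithmic singularity at the origin. I expect this integrability bookkeeping, together with the formal justification of the parameter differentiation, to be the only genuine obstacle — once those are in place the algebra is immediate.
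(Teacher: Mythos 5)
Your proof is correct and is essentially the paper's own argument run in reverse: the paper starts from the defining sum, inserts $\frac{1}{k^m}=\frac{1}{(m-1)!}\int_0^\infty t^{m-1}e^{-kt}\,dt$, exchanges the finite sum with the integral to produce $(1-e^{-t})^n-1$, and then substitutes $u=e^{-t}$ and $t=1-u$, whereas you start from the integral, substitute $u=1-t$, expand $1-(1-u)^n$ binomially, and evaluate $\int_0^1 u^{k-1}\log^{m-1}u\,du=(-1)^{m-1}(m-1)!/k^m$, which is exactly the paper's gamma-type identity transported by that same substitution (your derivation of it by differentiating $\int_0^1 u^{s-1}\,du=1/s$ under the integral sign is a minor, equally valid variant). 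Since both arguments rest on the identical term-by-term decomposition, this is the same approach in substance.
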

\begin{proof}
Using the simple formula
$$
\frac{1}{k^m}=\frac{1}{(m-1)!}\int\limits_0^\infty t^{m-1}e^{-kt}\thinspace dt,
$$
we get
$$
S_n(m)=\sum\limits_{k=1}^n(-1)^{k-1}\binom{n}{k}\frac{1}{k^m}=\frac{-1}{(m-1)!}\sum\limits_{k=1}^n(-1)^k\binom{n}{k}\int\limits_0^\infty t^{m-1}e^{-kt}\thinspace dt.
$$
Inverting the orders of integration and summation, this yields
\begin{align*}
S_n(m)&=\frac{-1}{(m-1)!}\int_{0}^{\infty}t^{m-1}\sum_{k=1}^{n}(-1)^k\binom{n}{k}e^{-kt}\thinspace dt\\
&=\frac{-1}{(m-1)!}\int_{0}^{\infty}[(1-e^{-t})^n-1]t^{m-1}\thinspace dt \notag\\
&=\frac{(-1)^{m-1}}{(m-1)!}\int_{0}^{1}\frac{1-(1-u)^n}{1-(1-u)}\thinspace \log^{m-1}u\thinspace du\quad (\mbox{by setting}\quad u=e^{-t}),
\end{align*}
which completes the proof by the change of variable $1-u=t.$
\end{proof}
Employing the integral representation (\ref{e:21}) and identity (\ref{e:12}) we can evaluate the numbers  $S_n(m)$ .
\begin{cor} For all $n,m\in\mathbb{N}$, we have
\begin{equation}\label{e:22}
S_n(m)=\frac{(-1)^{m}n!}{m!}G^{(m)}(1),
\end{equation}
where
$$
G(s)=\frac{\Gamma(s)}{\Gamma(n+s)}.
$$
\end{cor}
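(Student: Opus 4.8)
The plan is to reduce both sides of (\ref{e:22}) to one and the same integral, namely $\int_0^1 (\log u)^m (1-u)^{n-1}\,du$. First I would rewrite $G$ through the beta function. Since $B(s,n)=\Gamma(s)\Gamma(n)/\Gamma(s+n)$ by (\ref{e:12}) and $\Gamma(n)=(n-1)!$, the function $G(s)=\Gamma(s)/\Gamma(n+s)$ is just $G(s)=B(s,n)/(n-1)!$. Substituting the integral form of the beta function from (\ref{e:12}) gives
$$G(s)=\frac{1}{(n-1)!}\int_0^1 u^{s-1}(1-u)^{n-1}\,du.$$

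Next I would differentiate $m$ times in $s$. Because $\partial_s^m\,u^{s-1}=u^{s-1}(\log u)^m$, differentiating under the integral sign yields $G^{(m)}(s)=\frac{1}{(n-1)!}\int_0^1 u^{s-1}(\log u)^m(1-u)^{n-1}\,du$, so that
$$G^{(m)}(1)=\frac{1}{(n-1)!}\int_0^1 (\log u)^m(1-u)^{n-1}\,du.$$
For the left-hand side I would start from the integral representation (\ref{e:21}) and apply the change of variable $u=1-t$ to recast it as $S_n(m)=\frac{(-1)^{m-1}}{(m-1)!}\int_0^1 \frac{1-(1-u)^n}{u}\,(\log u)^{m-1}\,du$. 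I would then integrate by parts, taking $f(u)=1-(1-u)^n$ (so $f'(u)=n(1-u)^{n-1}$) against $dg=\frac{(\log u)^{m-1}}{u}\,du$, whose antiderivative is $g(u)=\frac{(\log u)^m}{m}$. The boundary contributions vanish at $u=1$ (where $\log u=0$) and at $u=0$ (where $1-(1-u)^n\sim nu$ dominates $(\log u)^m$), leaving
$$\int_0^1 \frac{1-(1-u)^n}{u}\,(\log u)^{m-1}\,du=-\frac{n}{m}\int_0^1 (\log u)^m(1-u)^{n-1}\,du.$$

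Combining the two computations, I obtain $S_n(m)=\frac{(-1)^{m-1}}{(m-1)!}\bigl(-\tfrac{n}{m}\bigr)\int_0^1(\log u)^m(1-u)^{n-1}\,du=\frac{(-1)^{m}n}{m!}\,(n-1)!\,G^{(m)}(1)=\frac{(-1)^m n!}{m!}G^{(m)}(1)$, which is (\ref{e:22}). The two steps requiring genuine care, rather than routine algebra, are the analytic justifications: interchanging $\tfrac{d^m}{ds^m}$ with the integral (the integrand $u^{s-1}(\log u)^m(1-u)^{n-1}$ is integrable near $u=0$ uniformly for $s$ in a neighborhood of $1$, so a dominated-convergence argument applies), and the verification that the boundary terms in the integration by parts indeed vanish, which hinges on the cancellation $1-(1-u)^n=O(u)$ as $u\to0^+$ that tames the factor $(\log u)^m$. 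These are the only places where I expect the argument to need more than formal manipulation.
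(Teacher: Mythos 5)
Your proposal is correct and is essentially the paper's own proof: both start from the integral representation (\ref{e:21}), integrate by parts to reach $\frac{(-1)^m n}{m!}\int_0^1 t^{n-1}\log^m(1-t)\,dt$, and identify that integral with $\frac{\partial^m}{\partial s^m}B(n,s)\big|_{s=1}$ by differentiating the beta integral under the integral sign. The only cosmetic differences are your preliminary substitution $u=1-t$ and the direction of the differentiation-under-the-integral step (you compute $G^{(m)}(1)$ as an integral, the paper pulls $\partial_s^m$ out of the integral), together with your more explicit attention to the boundary terms and the justification of the interchange.
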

\begin{proof}
By integration by parts, (\ref{e:21}) becames
\begin{align*}
S_n(m)&=\frac{(-1)^{m}n}{m!}\int\limits_0^1t^{n-1}\log^m(1-t)dt\\
&=\frac{(-1)^{m}n}{m!}\int\limits_0^1t^{n-1}\frac{\partial^m}{\partial s^m}(1-t)^{s-1}\bigg|_{s=1}dt\\
&=\frac{(-1)^{m}n}{m!}\frac{\partial^m}{\partial s^m}\int\limits_0^1t^{n-1}(1-t)^{s-1}dt\bigg|_{s=1}\\
&=\frac{(-1)^{m}n}{m!}\frac{\partial^m}{\partial s^m}B(n,s)\bigg|_{s=1}\\
&=\frac{(-1)^{m}n}{m!}\frac{\partial^m}{\partial s^m}\frac{\Gamma(n)\Gamma(s)}{\Gamma(n+s)}\bigg|_{s=1},
\end{align*}
which completes the proof.
\end{proof}
The first values of $S_n(m)$ can be computed  by using formula (\ref{e:22}) and Eq. (\ref{e:13}) as follows.
\begin{equation}\label{e:23}
S_n(1)=H_n,
\end{equation}
\begin{equation}\label{e:24}
S_n(2)=\frac{H_n^2}{2}+\frac{H_n^{(2)}}{2},
\end{equation}
\begin{equation}\label{e:25}
S_n(3)=\frac{H_n^3}{6}+\frac{H_nH_n^{(2)}}{2}+\frac{H_n^{(3)}}{3},
\end{equation}
\begin{equation}\label{e:26}
S_n(4)=\frac{1}{24}\bigg\{H_n^4+6H_n^2H_n^{(2)}+8H_nH_n^{(3)}+3\left(H_n^{(2)}\right)^2+6H_n^{(4)}\bigg\}
\end{equation}
and
\begin{align}\label{e:27}
S_n(5)&=\frac{1}{120}\bigg\{H_n^5+10H_n^3H_n^{(2)}+20H_n^2H_n^{(3)}+15H_n\left(H_n^{(2)}\right)^2\notag \\
&+30H_nH_n^{(4)}+20H_n^{(2)}H_n^{(3)}+24H_n^{(5)}\bigg\}.
\end{align}
The first three of these identities are already known from the paper by Flajolet and Sedgewick \cite[p.108]{20}.

Now we derive the generating function of $S_n(m)$.
\begin{thm}For $m\in\mathbb{N}$ and $-1<x\leq1/2$ it holds that
\begin{equation}\label{e:28}
\sum\limits_{n=1}^\infty S_n(m)x^n=-\frac{1}{1-x}Li_m\left(-\frac{x}{1-x}\right),
\end{equation}
where $Li_m$ is polylogarithm function defined by (\ref{e:14}).
\end{thm}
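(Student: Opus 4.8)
The plan is to substitute the definition (\ref{e:1}) of $S_n(m)$ into the power series and then interchange the two summations. Writing
\[
\sum_{n=1}^\infty S_n(m)x^n=\sum_{n=1}^\infty x^n\sum_{k=1}^n\binom{n}{k}\frac{(-1)^{k-1}}{k^m},
\]
I would first argue in a small disk, say $|x|<1/2$, where the doubly-indexed family $\binom{n}{k}|x|^n/k^m$ is absolutely summable: bounding the inner sum against $\sum_{k\ge1}k^{-m}|x|^k/(1-|x|)^{k+1}$, which is finite precisely when $|x|/(1-|x|)<1$, i.e.\ $|x|<1/2$. Fubini's theorem then legitimizes reversing the order to get $\sum_{k=1}^\infty\frac{(-1)^{k-1}}{k^m}\sum_{n=k}^\infty\binom{n}{k}x^n$.

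The key algebraic step is the standard negative-binomial identity $\sum_{n\ge k}\binom{n}{k}x^n=x^k/(1-x)^{k+1}$, which one may quote or recover by differentiating the geometric series $k$ times. Substituting it, pulling out the common factor $1/(1-x)$, and putting $y=x/(1-x)$ turns the expression into
\[
\frac{1}{1-x}\sum_{k=1}^\infty\frac{(-1)^{k-1}}{k^m}\,y^k=-\frac{1}{1-x}\sum_{k=1}^\infty\frac{(-y)^k}{k^m}=-\frac{1}{1-x}\,Li_m\!\left(-\frac{x}{1-x}\right),
\]
which is exactly (\ref{e:28}) by the definition (\ref{e:14}) of the polylogarithm.

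The remaining task, and the main obstacle, is to upgrade the identity from the disk $|x|<1/2$ to the full interval $-1<x\le 1/2$, because the absolute-convergence argument above breaks down once $|x|\ge 1/2$, so Fubini alone cannot reach the negative part $(-1,-1/2]$ nor (for $m=1$) the endpoint. I would resolve this by the identity theorem. The left-hand side is a power series whose coefficients $S_n(m)$ grow only polylogarithmically (indeed $S_n(m)\sim H_n^m/m!$ by (\ref{e:23})--(\ref{e:27})), so it has radius of convergence $1$ and is analytic on $|x|<1$; the right-hand side is analytic wherever $-x/(1-x)$ avoids the branch cut $[1,\infty)$ of $Li_m$, and for real $x<1$ one checks $-x/(1-x)<1$, so it too is analytic on $(-1,1)$. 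Having established that the two analytic functions agree on the subinterval $|x|<1/2$, they must agree on all of $(-1,1)$, and in particular on $(-1,1/2]$.

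Finally, the endpoint $x=1/2$ deserves a separate word, since there $-x/(1-x)=-1$ lies on the circle of convergence of the defining series (\ref{e:14}). This value is still admissible because $Li_m(-1)$ converges for every $m\in\mathbb{N}$ (alternatingly), and I would justify passing to the limit $x\uparrow 1/2$ on both sides by Abel's theorem, using that the series $\sum_n S_n(m)x^n$ converges at $x=1/2$. Thus the only genuinely delicate points are the convergence bookkeeping for the interchange and the continuation to the boundary of the stated range; the computation itself is short.
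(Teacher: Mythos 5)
Your proof is correct, and it is worth comparing with the paper's, which is much shorter but far less self-contained: the paper simply quotes the Euler (binomial) transform formula for generating functions from Flajolet and Sedgewick --- if $b_n=\sum_{k=0}^n(-1)^k\binom{n}{k}a_k$ and $F$ generates $(a_n)$, then $(b_n)$ is generated by $\frac{1}{1-x}F\left(-\frac{x}{1-x}\right)$ --- and applies it to $a_k=1/k^m$. Your Fubini-plus-negative-binomial computation is precisely a proof of that cited formula in the case at hand, so the algebraic core is the same; what your write-up adds is everything the paper leaves implicit, namely a justification of the stated range $-1<x\le 1/2$. The direct interchange of summation is absolutely convergent only for $|x|<1/2$ (at $|x|=1/2$ it still works when $m\ge2$, though not for $m=1$), and the paper's proof says nothing about why the identity persists on $(-1,-1/2]$ or at the endpoint; your identity-theorem extension and Abel-limit treatment of $x=1/2$ close this gap correctly. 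Two small refinements would tighten your argument. First, for the radius-of-convergence claim you should cite Theorem 2.1, i.e. the representation (\ref{e:2}), rather than (\ref{e:23})--(\ref{e:27}): the latter only cover $m\le 5$, whereas the bound $0<S_n(m)\le H_n^m$ for every $m$ is immediate from the positive multiple-sum representation and gives radius exactly $1$. Second, you can avoid invoking the analytic continuation of $Li_m$ past its disk of convergence altogether: the map $x\mapsto -x/(1-x)$ carries $(-1,1/2)$ bijectively onto itself, so on all of $(-1,1/2)$ the right-hand side is already given by the convergent defining series (\ref{e:14}) and is analytic there; the identity theorem then applies directly on $(-1,1/2)$, and only the single point $x=1/2$ (where the argument equals $-1$) needs the Abel-type limit you describe.
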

\begin{proof} Let $(a_n)$ be any sequence,

\begin{equation}\label{e:29}
b_n=\sum\limits_{k=0}^n(-1)^k\binom{n}{k}a_k,
\end{equation}
and $F$ be the generating function of $(a_n)$, namely, $F(x)=\sum_{n=0}^{\infty}a_nx^n$, then from \cite[p.103]{20} we know that generating function $G$ of $(b_n)$ is
$$
G(x)=\frac{1}{1-x}F\left(-\frac{x}{1-x}\right).
$$
Taking $a_k=1/k^m$ in (\ref{e:29}), we find that generating function of $S_n(m)$ is
$$
-\frac{1}{1-x}Li_m\left(-\frac{x}{1-x}\right).
$$
\end{proof}
Integrating both sides of (\ref{e:28}), after multiplying by $1/x$, we get
\begin{equation*}
\sum\limits_{n=1}^\infty S_n(m)\frac{x^n}{n}=-\int\limits_0^x\frac{Li_m(-t/(1-t))}{t(1-t)}\thinspace dt.
\end{equation*}
By the change of variable $t=-\frac{u}{1-u}$, we get after some simple computations for $-1\leq x\leq \frac{1}{2}$
\begin{equation}\label{e:30}
\sum\limits_{n=1}^\infty S_n(m)\frac{x^n}{n}=-Li_{m+1}\left(-\frac{x}{1-x}\right).
\end{equation}
\section{Applications}
Taking some particular values for $m$ and $x$ in Eqs. (\ref{e:28}) and (\ref{e:30}), we can evaluate many interesting series representations for the Riemann zeta function $\zeta$ and finite harmonic sums.
For $x=1/2$ , we get from (\ref{e:28})
\begin{equation*}
\sum\limits_{n=1}^\infty\frac{S_n(m)}{2^n}=-2Li_m(-1)\quad m=2,3,4\cdots.
\end{equation*}
Since
\begin{equation*}
Li_m(-1)=\frac{1-2^{m-1}}{2^{m-1}}\thinspace\zeta(m),
\end{equation*}
we find
\begin{equation}\label{e:31}
\zeta(m)=\frac{2^{m-2}}{2^{m-1}-1}\sum\limits_{n=1}^\infty\frac{S_n(m)}{2^n}\quad m=2,3,4\cdots
\end{equation}
with $\zeta$ be the Riemann zeta function.

For  $m=2$, we get from (\ref{e:31}) by taking into account (\ref{e:24})
\begin{equation*}
\sum\limits_{n=1}^\infty\frac{H_n^2+H_n^{(2)}}{2^n}=2\zeta(2).
\end{equation*}
For $m=3$, we get from (\ref{e:31}) by the help of (\ref{e:25}) the following new series representation for the Apery constant $\zeta(3)$
\begin{equation*}
\zeta(3)=\frac{1}{9}\sum\limits_{n=1}^\infty\frac{H_n^3+3H_nH_n^{(2)}+2H_n^{(3)}}{2^n}.
\end{equation*}
For $x=1/2$, we get from (\ref{e:30})
\begin{equation}\label{e:32}
\zeta(m+1)=\frac{2^m}{2^m-1}\sum\limits_{n=1}^\infty\frac{S_n(m)}{n2^n} \quad m=1,2,3,\cdots.
\end{equation}
For $m=1$  we find from  (\ref{e:32})
\begin{equation}\label{e:33}
\sum\limits_{n=1}^\infty\frac{H_n}{n2^n}=\frac{1}{2}\zeta(2).
\end{equation}
For  $m=2$  we find from  (\ref{e:32}) the following  series representation for the Apery constant $\zeta(3)$
\begin{equation}\label{e:34}
\zeta(3)=\frac{2}{3}\sum\limits_{n=1}^\infty\frac{H_n^2+H_n^{(2)}}{n2^n}.
\end{equation}
For $x=1/2$ and $m=3$  we find from (\ref{e:32})
\begin{equation}\label{e:35}
\zeta(4)=\frac{4}{21}\sum\limits_{n=1}^\infty\frac{H_n^3+3H_nH_n^{(2)}+2H_n^{(3)}}{n2^n}.
\end{equation}
The last three identities are known and appear in \cite{10}; see Eqs. (4.25), (4.24), (4.26) and (4.27).
Taking $x=-1$ in (\ref{e:30})
we get
\begin{equation}\label{e:36}
\sum\limits_{n=1}^\infty(-1)^n\frac{S_n(m)}{n}=-Li_{m+1}(1/2)
\end{equation}
Lettting $m=1$ in (\ref{e:36})  
and taking into account $Li_2(1/2)=\zeta(2)/2-\frac{1}{2}\log^22$, we get
\begin{equation}\label{e:37}
\sum\limits_{n=1}^\infty(-1)^{n-1}\frac{H_n}{n}=\frac{1}{2}(\zeta(2)-\log^22).
\end{equation}
For $m=2$  we get from  (\ref{e:36})
\begin{equation*}
\sum\limits_{n=1}^\infty\frac{(-1)^n\left(H_n^2+H_n^{(2)}\right)}{n}=-2Li_3(1/2).
\end{equation*}
Using $Li_3(1/2)=\frac{7}{8}\zeta(3)+\frac{1}{2}\zeta(2)\log(\frac{1}{2})-\frac{1}{6}\log^3(\frac{1}{2})$, see \cite[p.2]{18}, this is
\begin{equation}\label{e:38}
\sum\limits_{n=1}^\infty\frac{(-1)^{n-1}\left(H_n^2+H_n^{(2)}\right)}{n}=\frac{7}{4}\zeta(3)-\zeta(2)\log 2+\frac{1}{3}\log^3 2.
\end{equation}
Let $x=1/2$ and $m=1$ in (\ref{e:28}), to get the following  well known identity
\begin{equation}\label{e:39}
\sum\limits_{n=1}^\infty\frac{H_n}{2^n}=-2Li_1(-1)=2\log2.
\end{equation}
Eq. (\ref{e:39}) is well known; see \cite{28} but Eq. (\ref{e:38}) seems to be new.
Taking  $m=4$ in (\ref{e:32}), we obtain by using (\ref{e:26})
\begin{equation*}
\zeta(5)=\frac{2}{45}\sum\limits_{n=1}^{\infty}\frac{H_n^4+6H_n^2H_n^{(2)}+8H_nH_n^{(3)}+3\left(H_n^{(2)}\right)^2+6H_n^{(4)}}{n2^n}.
\end{equation*}
Let $\rho=\frac{\sqrt{5}-1}{2}$ be the inverse of the golden ratio. Then for $x=-\rho$ and $m=2$, we get from (\ref{e:30}) by using (\ref{e:24})
\begin{equation*}
\sum_{n=1}^{\infty}\frac{H_n^2+H_n^{(2)}}{n}(-\rho)^n=-2Li_3(\rho^2).
\end{equation*}
Using $Li_3(\rho^2)=\frac{4}{5}\zeta(3)+\frac{4}{5}\zeta(2)\log(\rho)-\frac{2}{3}\log^3(\rho)$; see \cite[p.2]{30}, we get
$$
\sum_{n=1}^{\infty}\frac{H_n^2+H_n^{(2)}}{n}(-\rho)^n=-\frac{8}{5}\zeta(3)-\frac{8}{5}\zeta(2)\log(\rho)+\frac{4}{3}\log^3(\rho).
$$
Taking $x=-\rho$ and $m=3$ in (\ref{e:28}), we find
\begin{equation*}
\sum_{n=1}^{\infty}\left(H_n^3+3H_nH_n^{(2)}+2H_n^{(3)}\right)(-\rho)^{n-1}=\frac{24}{5}\zeta(3)+\frac{24}{5}\zeta(2)\log \rho-4\log^3 \rho.
\end{equation*}
From Theorem 2.1 we have that for all $m,n\in\mathbb{N}$
\begin{equation}\label{e:40}
S_n(0)=1 \quad \mbox{and} \quad S_n(m)=\sum\limits_{k=1}^n\frac{S_k(m-1)}{k}.
\end{equation}
Applying this identity and Eqs. (\ref{e:23})-(\ref{e:27}) we get for $m=2,3,4,5$ the following finite harmonic sum identities, respectively:

\begin{equation}\label{e:41}
\sum\limits_{k=1}^n\frac{H_k}{k}=\frac{H_n^2+H_n^{(2)}}{2},
\end{equation}

\begin{equation}\label{e:42}
\sum\limits_{k=1}^n\frac{H_k^2+H_k^{(2)}}{k}=\frac{H_n^3+3H_nH_n^{(2)}+2H_n^{(3)}}{3}
\end{equation}

\begin{align}\label{e:43}
\sum\limits_{k=1}^n&\frac{H_k^3+3H_kH_k^{(2)}+2H_k^{(3)}}{k}\notag\\
&=\frac{H_n^4+6H_n^2H_n^{(2)}+8H_nH_n^{(3)}+3\left(H_n^{(2)}\right)^2+6H_n^{(4)}}{4},
\end{align}
and
\begin{align}\label{e:44}
\sum\limits_{k=1}^n&\frac{H_k^4+6H_k^2H_k^{(2)}+8H_kH_k^{(3)}+3\left(H_k^{(2)}\right)^2+6H_k^{(4)}}{k}\notag\\
&=\frac{1}{5}\bigg\{H_n^5+10H_n^3H_n^{(2)}+20H_n^2H_n^{(3)}+15H_n\left(H_n^{(2)}\right)^2.\notag \\
&+30H_nH_n^{(4)}+20H_n^{(2)}H_n^{(3)}+24H_n^{(5)}\bigg\}.
\end{align}
Identities (\ref{e:41}) and (\ref{e:42}) are known and can be found in \cite{1}. Eqs. (\ref{e:31}) and (\ref{e:32}) can be compared to some of the results given in \cite{26}:
$$
\sum\limits_{k=1}^\infty\frac{S_n(m)}{n^2}=(m+1)\zeta(m+2), m\geq 0,
$$
$$
\sum\limits_{k=1}^\infty\frac{S_n(m)}{n(n+1)}=(m+1)\zeta(m+1), m\geq 1,
$$
and
$$
\sum\limits_{k=1}^\infty\frac{S_n(m)}{(n+1)(n+2)}=m\zeta(m), m\geq 2.
$$
The first of these identities also follows from \cite[Cor. 2]{16}.
Our final example can not be deduced from the results above but we think it is curious enough to warrant adding it here.
$$
\sum\limits_{k=1}^{n}\frac{H_kH_{k-1}}{k}=\frac{H_n^3+H_n^{(3)}}{3}.
$$
This can be derived by summing over $k=1,2,3,...,n$ the identity
$$
H_k^3-H_{k-1}^3=\frac{1}{k^3}+\frac{3H_kH_{k-1}}{k}.
$$

\section{Remarks}
\begin{rem}
Numbers like $S_n(m)$ arise from number theory. M. I. Israilov \cite{27} considered the coefficients $\gamma_n$ in the Laurent expansion of the Riemann zeta function $\zeta(s)$ about its pole $s=1$ and found a new expression for $\gamma_n$ including numbers
$$
b_{j,\mu}=\sum\limits_{r_1=j}^\mu\frac{1}{r_1}\sum\limits_{r_2=j-2}^{r_1-1}\frac{1}{r_2}\cdots\sum\limits_{r_j=1}^{r_{j-1}-1}\frac{1}{r_j}.
$$
\end{rem}
\begin{rem} In fact Eq. (3.8) is valid for all integers $m$. If we replace $-m$ by $m$ in Eq. (3.8) we get for $-1<x<1/2$
$$
\sum\limits_{n=1}^\infty S_n(-m)x^n=-\frac{1}{1-x}\sum\limits_{k=1}^\infty k^m\left(\frac{-x}{1-x}\right)^m.
$$
But by Theorem 2.3 we have $S_n(-m)=0$ for $n>m$, and in this case the left side becomes a finite sum. We therefore have
$$
\frac{1}{x-1}\sum\limits_{k=1}^\infty k^m\left(\frac{-x}{1-x}\right)^k=\sum\limits_{n=1}^m S_n(-m)x^n \quad -1<x<1/2.
$$
\end{rem}
\begin{rem}
Taking into account Theorem 2.3 we should notice that the relation is also valid even if we  replace $m$ by any real number.
\end{rem}
\begin{rem}
Identity (\ref{e:28}) is proved by Connon in \cite{15} but his proof is very  long (2 pages).
\end{rem}
\begin{rem}The numbers $S_n(m)=-A_n(m)$ are closely related with the Stirling numbers of the second kind S(n,m) : $S(n,m)=\frac{(-1)}{m!}S_m(-n).$
\end{rem}
\textbf{Acknowledgement} I would like to thank the referee  for his/her thorough review and highly appreciate the comments and
suggestions, which significantly contributed to improving the quality of the publication. Dedicated to great Turkish mathematician Professor \textit{Masatoshi G{\"{u}}nd{\"{u}}z Ikeda} on the occasion
 of his $90$th birthday

\end{document}